\documentclass[a4paper,12pt]{article}
\usepackage[dvips]{graphicx}
\usepackage{amsmath}
\usepackage{amsthm}
\usepackage{amssymb}

\title{Sliding possibility of the Julia sets}
\author{Hiromichi Nakayama and Takuya Takahashi}
\pagestyle{plain}
\newtheorem{thm}{Theorem}
\newtheorem{lemma}{Lemma}

\theoremstyle{remark}

\renewcommand{\qedsymbol}{$\blacksquare$}


\renewcommand{\phi}{\varphi}
\renewcommand{\epsilon}{\varepsilon}

\newcommand{\RR}{\mathbf{R}}

\newcommand{\CC}{\mathbf{C}}


\DeclareMathOperator{\diam}{diam}
\DeclareMathOperator{\id}{id}


\date{}
\begin{document}
\maketitle

A.\,Sannami constructed an example of the $C^{\infty}$-Cantor set embedded in the real line $\RR$ whose difference set has a positive measure in \cite{San}, which was an answer of the question given by J.\ Palis (\cite{Pa}).
In this paper, we generalize the definition of the difference sets for sets of the two dimensional Euclidean space $\RR^2$ and estimate the measure of the difference sets of the Julia sets.

Here the difference set for two subsets $X$ and $Y$ of $\RR^2$ is defined by  
$$X-Y=\{x - y \in \RR^2 \,;\, x \in X, y \in Y\}.$$
For a vector $\mu$ of $X - X$, there are two points $x$ and $y$ of $X$ satisfying $x=y+\mu$.
Thus the slided set $X+\mu$ intersects $X$ itself.
In this point of view, the measure of $X-X$ is closely related to the problem whether the set $X$  can slide easily.

 Let ${\cal C}$ be a Cantor set in $\RR$ whose difference set has a positive measure (\cite{San}).
Then we have $m(({\cal C}\times {\cal C})-({\cal C} \times {\cal C}))=m(({\cal C}-{\cal C})\times ({\cal C}-{\cal C}))=m({\cal C}-{\cal C})^2>0$ for the Lebesgue measure $m$.
Thus the measure of the $2$-dimensional difference set is not always zero.
In this paper, we consider the condition when the measures of the difference sets of the Julia sets vanish.

Let $c$ be an element of $\CC$.
We define the quadratic map $Q_c:\CC \to \CC$ by $Q_c(z)=z^2+c$.
Let $D=\{ z\,;\, |z|\leq |c|\}$.
The filled Julia set is the set of the bounded orbits of $Q_c$, denoted by $K_c$, and the Julia set $J_c$ is the boundary of $K_c$.
In the following, we assume that $|c|>2$. 
Then it was shown that $K_c$ is a Cantor set and $J_c$ coincides with $K_c$ (\cite{De}).
Furthermore, $J_c=\bigcap_{n \geq 0} {Q_c}^{-n}(D)$.

\begin{thm}
\label{thm1}
If $|c|>3+\sqrt{3}$, then the Lebesgue measure $m$ of the difference set of the Julia set $J_c$ vanishes $($i.\,e. $m(J_c-J_c)=0)$.
\end{thm}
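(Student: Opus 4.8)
The plan is to exploit the self-covering of $J_c$ by the two inverse branches of $Q_c$ and to estimate the measure of the difference set through a geometric covering. Write $g_\pm(w)=\pm\sqrt{w-c}$ for the two branches of $Q_c^{-1}$; since $J_c$ is completely invariant ($Q_c^{-1}(J_c)=J_c$) and avoids the critical value $c=Q_c(0)$, the two branches are single valued on $J_c$ (they correspond to the two sectors into which $Q_c^{-1}(D)$ splits), and the invariance yields the functional equation $J_c=g_+(J_c)\cup g_-(J_c)$. Iterating, $J_c=\bigcup_{w}J_w$ where $w=(w_1,\dots,w_n)\in\{+,-\}^n$ and $J_w=g_{w_1}\circ\cdots\circ g_{w_n}(J_c)$, a decomposition of $J_c$ into $2^n$ pieces.

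First I would pin down the contraction. For $z\in J_c$ the whole forward orbit lies in $D$, so with $z_1=Q_c(z)$ and $z_2=Q_c(z_1)$ one has $|z_1|^2=|z_2-c|\le 2|c|$ and hence $|z|^2=|z_1-c|\ge|c|-|z_1|\ge|c|-\sqrt{2|c|}$. Thus every $z\in J_c$ satisfies $|z|\ge\rho:=\sqrt{|c|-\sqrt{2|c|}}>0$, and since $|g_\pm'(w)|=1/(2|g_\pm(w)|)=1/(2|z|)$, the branches contract on $J_c$ with ratio at most $\lambda:=1/(2\rho)$. The quantity $|c|-\sqrt{2|c|}$ is increasing in $|c|$ for $|c|>2$, and $|c|>3+\sqrt3$ forces $|c|-\sqrt{2|c|}>1$, i.e. $\rho>1$ and $\lambda<\tfrac12$; equivalently $4\lambda^2=1/\rho^2<1$, which is the inequality that makes the covering converge.

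Next I would convert this into a diameter bound $\diam(J_w)\le C\lambda^n$ for each of the $2^n$ pieces and then cover the difference set. Granting such a bound, $J_c-J_c=\bigcup_{v,w}(J_v-J_w)$ is a union of $4^n$ sets, each of diameter at most $\diam J_v+\diam J_w\le 2C\lambda^n$, hence each contained in a disk of that radius. Therefore
$$m(J_c-J_c)\le 4^{n}\cdot\pi\,(2C\lambda^{n})^{2}=4\pi C^{2}\,(4\lambda^{2})^{n}.$$
Since $4\lambda^2<1$ by the previous step, letting $n\to\infty$ gives $m(J_c-J_c)=0$, as claimed.

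The main obstacle is the diameter estimate $\diam(J_w)\le C\lambda^n$: the pointwise bound $|g_\pm'|\le\lambda$ on $J_c$ does not at once integrate to a bound on $\diam(g_{w_1}\circ\cdots\circ g_{w_n}(J_c))$, because $J_c$ is totally disconnected and the branches carry a square-root singularity at $c$. I would control this through bounded distortion: the orbit of $J_c$ stays at distance at least $|c|-\sqrt{2|c|}$ from $c$, so on a fixed neighborhood of $J_c$ disjoint from a disk about $c$ the branches obey a uniform Koebe-type distortion bound, and applying the mean value inequality componentwise to the shrinking, nearly convex components of $Q_c^{-n}(D)$ yields $\diam(J_w)\le C\lambda^n$ with a distortion constant $C$ independent of $n$ and $w$. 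I expect this step, rather than the counting, to be the technical heart of the argument. I also note that this scheme in fact only needs $|c|>2+\sqrt3$ to secure $\rho>1$, so the stated hypothesis $|c|>3+\sqrt3$ leaves some room, presumably absorbed by a more explicit and slightly cruder version of the diameter bound.
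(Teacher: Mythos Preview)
Your overall scheme matches the paper's exactly: decompose $J_c$ into $2^n$ itinerary pieces, bound their diameters, cover $J_c-J_c$ by $4^n$ disks, and let $n\to\infty$. The difference lies in how the diameter bound is obtained. You treat it as a distortion problem and leave it as the ``technical heart''; the paper bypasses distortion entirely with the elementary identity
\[
|Q_c(z')-Q_c(w')| \;=\; |z'^2-w'^2| \;=\; |z'-w'|\,|z'+w'|,
\]
so it suffices to bound $|z'+w'|$ from below. Since $z',w'$ lie in the same branch image $I_{s_0}=G_{s_0}(D)$ (the square root halves the angular width of $D-c$), the angle between them is less than $\pi/2$, whence $|z'+w'|>\sqrt{|z'|^2+|w'|^2}\ge\sqrt{2}\,r_{n+1}$, where $r_{n+1}$ is an explicit lower bound for $|z|$ on $Q_c^{-(n+1)}(D)$. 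This yields a per-step contraction of $1/(\sqrt{2}\,r_{n+1})$ with no Koebe theory and no connectivity issues. The paper also uses the increasing sequence $r_n\uparrow r_\infty=\sqrt{(2|c|-1-\sqrt{1+4|c|})/2}$ rather than the fixed $\rho=r_2=\sqrt{|c|-\sqrt{2|c|}}$ you use; the threshold $|c|>3+\sqrt{3}$ is exactly the condition $r_\infty>\sqrt{2}$ needed for $4\cdot\bigl(1/(\sqrt{2}\,r_\infty)\bigr)^2<1$. Your sharper pointwise factor $\lambda=1/(2\rho)$ would indeed lower the threshold to $2+\sqrt{3}$, but only if the distortion step is actually carried out; the paper's $\sqrt{2}$ loss in the $|z'+w'|$ estimate is precisely what pushes the bound up to $3+\sqrt{3}$, and is the price paid for an elementary argument.
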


The authors thank to S.\,Matsumoto who gave the attention on the difference sets to the author.

\section{Preparation for the Julia sets}

Let us define $F_+:\CC\to \CC$ by $F_+(r e^{i\theta})=\sqrt{r} e^{\frac{i\theta}2}$ for $0\leq \theta < 2\pi$ and $F_-:\CC\to \CC$ by $F_-(r e^{i\theta})=- \sqrt{r} e^{\frac{i\theta}2}$ for $0\leq \theta < 2\pi$.
For the map $L(z)=z-c$,  the composition $F_+\cdot L$ is denoted by $G_0$ and the composition $F_- \cdot L$ is denoted by $G_1$.
Then $Q_c\cdot G_0=\id$ and $Q_c\cdot G_1=\id$.
Thus $G_0$ and $G_1$ are the partial inverse maps of $Q_c$.

When $|c|>2$, $G_0(D)\subset D$ and $G_1(D)\subset D$.
Let $I_0=G_0(D)$ and $I_1=G_1(D)$.
Then $I_0\cup I_1$ is the set contained in the well-known $8$-figured set.
Let $\{ s_0, s_1, \cdots,  s_n\}$ be a sequence consisting of the numbers $0$ and $1$.
Let $I{s_0 s_1\cdots s_n}=\{ z \in D\,;\, z\in I{s_0}, Q_c(z)\in I{s_1},  \cdots, {Q_c}^n(z) \in I{s_n} \}$.
Then $I{s_0s_1\cdots s_n}=G_{s_0}\cdot G_{s_1} \cdot \cdots \cdot G_{s_n}(D)$ and ${Q_c}^{-n}(D)=\bigcup_{\{s_0,s_1,\cdots, s_n\}} I{s_0s_1\cdots s_n}$.
The set $\bigcap_{n\geq 0} I{s_0\dots s_n}$ is called the component of the Julia set $J_c$.
Then $J_c$ is the union of the components.
The diameter of $I{s_0s_1\cdots s_n}$ is defined by
$$\diam I{s_0s_1\cdots s_n}=\max\{|z-w|\,;\,z, w\in I{s_0s_1\cdots s_n} \}.$$

We define the sequences $\{R_n\}_{n=1,2,\cdots}$ and $\{r_n\}_{n=1,2,\cdots}$ by $R_{n+1}=\sqrt{|c|+R_n}$, $R_1=\sqrt{2 | c|}$ and $r_{n+1}=\sqrt{|c|-R_n}$ and $r_1=0$.
Then $r_n\leq |z| \leq R_n$ for $z\in {Q_c}^{-n}(D)$.
When $|c|>2$, $\{R_n\}$ is monotone decreasing and $\{r_n\}$ is monotone increasing by induction.

\begin{lemma}
\label{lem1}
$\diam I{s_0s_1\cdots s_n} < \frac{1}{\sqrt{2}r_{n+1}} \diam I{s_1\cdots s_n}$ for $n \geq 1$.
In particular, $$\diam I{s_0s_1\cdots s_n} < 2^{-\frac{n}{2}} \frac{1}{r_{n+1}r_{n}\cdots r_2}\diam I_0.$$
\end{lemma}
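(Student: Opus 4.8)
The plan is to exploit the self-similar description $I{s_0 s_1 \cdots s_n} = G_{s_0}(I{s_1 \cdots s_n})$ and to measure precisely how much the single branch $G_{s_0}$ contracts. First I would take two points $w_1, w_2 \in I{s_0 s_1 \cdots s_n}$ and set $z_j = Q_c(w_j) \in I{s_1 \cdots s_n}$, so that $w_j = G_{s_0}(z_j)$. Since every branch satisfies $G_{s_0}(z)^2 = z - c$, this gives the exact factorization
$$z_1 - z_2 = w_1^2 - w_2^2 = (w_1 - w_2)(w_1 + w_2),$$
hence $|w_1 - w_2| = |z_1 - z_2|\,/\,|w_1 + w_2|$. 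Taking the supremum over $w_1,w_2$ already yields $\diam I{s_0 \cdots s_n} \le (\min|w_1 + w_2|)^{-1}\,\diam I{s_1 \cdots s_n}$, so everything reduces to the lower bound $|w_1 + w_2| \ge \sqrt{2}\,r_{n+1}$.

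For that bound I would argue geometrically. The points $w_1,w_2$ are obtained by applying $n+1$ inverse branches to $D$, so $|w_1|,|w_2| \ge r_{n+1}$; it therefore suffices to show $\Re(w_1\overline{w_2}) \ge 0$, i.e. that $w_1,w_2$ subtend an angle at most $\pi/2$ at the origin, because then
$$|w_1 + w_2|^2 = |w_1|^2 + |w_2|^2 + 2\Re(w_1\overline{w_2}) \ge |w_1|^2 + |w_2|^2 \ge 2 r_{n+1}^2.$$
The angle bound comes from the branch structure: the disk $L(D) = D - c$ has radius $|c|$, is centered at $-c$, and passes through the origin, so the directions of its points fill a half-plane of angular width $\pi$ about $\arg(-c)$; applying $F_{\pm}$, which halves arguments, compresses this into a sector of angular width $\pi/2$. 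Hence $I{s_0}$, and a fortiori $I{s_0 \cdots s_n}$, lies in a sector of opening $\pi/2$, and any two of its points subtend an angle $\le \pi/2$.

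The main obstacle is making the ``halving of arguments'' honest, since $F_{\pm}$ is discontinuous across the positive real axis and $L(D)$ can straddle that cut when $c$ is near the negative real axis. I would control this using $|z| \le R_n < |c|$ (valid because $R_1 = \sqrt{2|c|} < |c|$ for $|c|>2$): writing $z - c = -c\,(1 - z/c)$ with $|z/c| < 1$ shows $\arg(z - c)$ stays within $\arcsin(R_n/|c|) < \pi/2$ of $\arg(-c)$, so the relevant points of $L(I{s_1 \cdots s_n})$ sit in one genuine half-plane of directions and $F_{\pm}$ really does halve the spread; I would also use that each $I{s_1}$ lies in the closed half-plane $\operatorname{Image}(F_{\pm})$, which keeps consecutive pieces on one side of the cut. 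This delivers $|w_1 + w_2| \ge \sqrt{2}\,r_{n+1}$, and hence the stated inequality, with strictness coming from the fact that an extremal pair cannot simultaneously occupy the two edges of the sector and realize $\diam I{s_1 \cdots s_n}$.

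Finally, for the ``in particular'' estimate I would iterate the first inequality on the successively shorter strings $s_0 \cdots s_n,\ s_1 \cdots s_n,\ \ldots,\ s_{n-1}s_n$, which contributes the factors $(\sqrt2\,r_{n+1})^{-1},(\sqrt2\,r_n)^{-1},\ldots,(\sqrt2\,r_2)^{-1}$ and terminates at the length-one piece $I{s_n}$. Since $I_1 = -I_0$ gives $\diam I{s_n} = \diam I_0$, multiplying the $n$ factors yields
$$\diam I{s_0 \cdots s_n} < 2^{-\frac{n}{2}}\,\frac{1}{r_{n+1} r_n \cdots r_2}\,\diam I_0,$$
as claimed.
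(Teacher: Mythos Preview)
Your approach is essentially the same as the paper's: it too picks two points in $I{s_0\cdots s_n}$, uses the factorization $Q_c(z')-Q_c(w')=(z'-w')(z'+w')$, bounds $|z'+w'|$ from below via the angle bound $\angle(z',w')<\pi/2$ together with $|z'|,|w'|\ge r_{n+1}$, and then iterates. The only difference is that you spell out the justification of the angle bound (sector of width $\pi/2$ for $I_{s_0}$, with attention to the branch cut of $F_\pm$), whereas the paper simply asserts it in one line.
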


\begin{proof}
Let $z'$ and $w'$ be points of $I{s_0s_1\cdots s_n}$ such that $|z'-w'|=\diam I{s_0s_1\cdots s_n}$.
By definition, $I{s_0s_1\cdots s_n}=G_{s_0}(I{s_1\cdots s_n})$, and thus $Q_c(I{s_0s_1\cdots s_n})=I{s_1\cdots s_n}$.
Therefore, 
\begin{eqnarray*}
& &\diam I{s_1\cdots s_n}\\
&\geq& |Q_c(z')-Q_c(w')|\\
&=& |({z'}^2+c)-({w'}^2+c)|\\
&=&|{z'}^2-{w'}^2|\\
&=&|z'-w'|\ |z'+w'|.
\end{eqnarray*}
Let $z=Q_c(z')$ and $w=Q_c(w')$.
Then $z'=G_{s_0}(z)$ and $w'=G_{s_0}(w)$ are contained in $I{s_0}$.
Thus the angle between $z'$ and $w'$ is less than $\pi/2$.
Since $\angle (z',w') < \pi/2$, $|z'|\geq r_n$ and $|w'|\geq r_n$, we have $$|z'+w'|=\sqrt{|z'|^2+|w'|^2+2|z'|\,|w'|\cos \angle (z', w')} > \sqrt{|z'|^2+|w'|^2} \geq \sqrt{2}r_{n+1}.$$
Thus 
\begin{eqnarray*}
& & \diam I{s_0s_1\cdots s_n}\\
& < & \frac{1}{\sqrt{2} r_{n+1}} \diam I{s_1\dots s_n} \\
& < & \left( \frac{1}{\sqrt{2} r_{n+1}} \right) \cdots \left( \frac{1}{\sqrt{2} r_2}\right) \diam I{s_n}\\
&=&\left( \frac{1}{\sqrt{2}}\right)^n \frac{1}{r_{n+1}r_n \cdots r_2} \diam I_0.
\end{eqnarray*}
\end{proof}

Let $K_n$ denote $2^{-\frac{n}{2}} \frac{1}{r_{n+1}r_n\cdots r_2}\diam I_0$ in Lemma \ref{lem1}.
Thus $\diam I{s_0s_1\cdots s_n}< K_n$ for $n \geq 1$.

\section{Estimate of the measure of the difference sets}

\begin{lemma}
\label{lem2}
For any $I{s_0s_1\cdots, s_n}$, there is a closed disk $D{s_0s_1\cdots s_n}$ containing $I{s_0s_1\cdots s_n}$ such that the radius of $D{s_0s_1\cdots s_n}$ is equal to $\frac{\sqrt{3}}{2}\diam I{s_0s_1\cdots s_n}$.
\end{lemma}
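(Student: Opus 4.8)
The plan is to realize the diameter by two extreme points and center a disk at their midpoint. First I would choose points $z'$ and $w'$ of $I{s_0s_1\cdots s_n}$ with $|z'-w'|=\diam I{s_0s_1\cdots s_n}$; such points exist because $I{s_0s_1\cdots s_n}=G_{s_0}\cdots G_{s_n}(D)$ is compact, being a continuous image of the disk $D$, so the maximum in the definition of the diameter is attained. Writing $d=\diam I{s_0s_1\cdots s_n}$ and letting $p=(z'+w')/2$ be the midpoint of the segment joining $z'$ and $w'$, I would take $D{s_0s_1\cdots s_n}$ to be the closed disk of radius $\frac{\sqrt3}{2}d$ centered at $p$.

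It then remains to verify that this disk contains $I{s_0s_1\cdots s_n}$. The key observation is that every point $q\in I{s_0s_1\cdots s_n}$ satisfies $|q-z'|\leq d$ and $|q-w'|\leq d$, simply because $d$ is the diameter and both $z'$ and $w'$ lie in $I{s_0s_1\cdots s_n}$. To turn these two inequalities into a bound on $|q-p|$, I would invoke the median-length identity (the parallelogram law, or Apollonius' theorem):
$$|q-z'|^2+|q-w'|^2=2|q-p|^2+\tfrac{1}{2}|z'-w'|^2.$$
Substituting $|q-z'|\leq d$, $|q-w'|\leq d$ and $|z'-w'|=d$ gives $2|q-p|^2\leq 2d^2-\tfrac{1}{2}d^2=\tfrac{3}{2}d^2$, whence $|q-p|\leq \frac{\sqrt3}{2}d$. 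As $q$ was arbitrary, the disk $D{s_0s_1\cdots s_n}$ contains $I{s_0s_1\cdots s_n}$, and its radius is exactly $\frac{\sqrt3}{2}\diam I{s_0s_1\cdots s_n}$, as required.

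I do not expect a serious obstacle; the only delicate point is pinning down the constant. Geometrically, $I{s_0s_1\cdots s_n}$ lies in the lens-shaped intersection of the two radius-$d$ disks centered at $z'$ and $w'$, and the two corners of that lens sit at distance exactly $\frac{\sqrt3}{2}d$ from the midpoint $p$; the median identity is just the efficient way to see both that $p$ is a suitable center and that $\frac{\sqrt3}{2}d$ is precisely the radius needed. It is worth remarking that this constant is not optimal -- Jung's theorem would give the smaller radius $d/\sqrt3$ -- but the weaker bound, which comes effortlessly from the midpoint construction, is all that the later measure estimates require.
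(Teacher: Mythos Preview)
Your proof is correct and follows essentially the same approach as the paper: pick two points realizing the diameter, center the disk at their midpoint, and show the radius $\frac{\sqrt{3}}{2}d$ suffices because every point of the set lies in the lens formed by the two radius-$d$ disks about the endpoints. The only cosmetic difference is that the paper states the final inclusion geometrically (the lens is contained in the disk about the midpoint), whereas you make this quantitative via Apollonius' identity; your remark about Jung's theorem also matches the paper's comment that the constant could be sharpened without improving the main theorem.
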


\begin{proof}
Let $x$ and $y$ be points of $I{s_0s_1\cdots s_n}$ whose distance is equal to $\diam I{s_0s_1\cdots s_n}$.
For any point $z$ of $I{s_0s_1\cdots s_n}$, we have $|z-x|\leq |x-y|$ and $|z-y|\leq |x-y|$.
Thus $z$ is contained in the closed disk with the center $x$ whose radius is $|x-y|$, and furthermore $z$ is contained in the closed disk with the center $y$ whose radius is $|x-y|$.
The intersection of these closed disks is contained in the closed disk whose center is the middle point of $x$ and $y$ and radius is $\frac{\sqrt{3}}{2} |x-y|$.
Then this disk satisfies the conditions of $D{s_0s_1\cdots s_n}$.
\end{proof}

The above estimate of the radius of $D{s_0s_1\cdots s_n}$ can be made more strict, but it does not improve the condition of the main theorem in the following proof.

\begin{lemma}
\label{lem3}
Let $D_1$ and $D_2$ be closed disks whose radii are equal to $R$.
Let $O_1$ and $O_2$ denote the centers of $D_1$ and $D_2$ respectively.
Then $D_2-D_1$ is the closed disk $\{ v\,;\, |v-\overrightarrow{O_1O_2}|\leq 2R\}$, and thus the center is $\overrightarrow{O_1O_2}$ and the radius is $2R$.
\end{lemma}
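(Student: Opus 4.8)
The plan is to reduce everything to disks centred at the origin by translation, and then to describe the difference set of two such disks directly. Writing the disks as $D_1 = \{ O_1 + a \setcond |a| \leq R\}$ and $D_2 = \{ O_2 + b \setcond |b| \leq R\}$, any element of $D_2 - D_1$ has the form $(O_2 + b) - (O_1 + a) = \overrightarrow{O_1O_2} + (b - a)$. Hence $D_2 - D_1 = \overrightarrow{O_1O_2} + E$, where $E = \{ b - a \setcond |a| \leq R,\ |b| \leq R\}$ is the difference set of two closed disks of radius $R$ centred at the origin. It therefore suffices to show that $E$ equals the closed disk $\{ v \setcond |v| \leq 2R\}$, since translating by $\overrightarrow{O_1O_2}$ then yields the claimed description.

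For the inclusion $E \subseteq \{ v \setcond |v| \leq 2R\}$ I would simply invoke the triangle inequality: $|b - a| \leq |b| + |a| \leq 2R$. The reverse inclusion is the step that actually needs an argument, though an easy one: given any $v$ with $|v| \leq 2R$, set $b = v/2$ and $a = -v/2$; then $|a| = |b| = |v|/2 \leq R$, so $a$ and $b$ lie in the respective disks, while $b - a = v$. This symmetric midpoint construction exhibits every point of the disk of radius $2R$ as a genuine difference, giving $\{ v \setcond |v| \leq 2R\} \subseteq E$ and hence equality.

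The two inclusions combine to give $E = \{ v \setcond |v| \leq 2R\}$, and translating back produces $D_2 - D_1 = \{ v \setcond |v - \overrightarrow{O_1O_2}| \leq 2R\}$, a closed disk of radius $2R$ centred at $\overrightarrow{O_1O_2}$, as asserted. There is no serious obstacle here: the only point that requires care is the reverse inclusion, where one must produce explicit preimages rather than merely bound a norm, and the symmetric choice $a = -v/2$, $b = v/2$ handles this cleanly. The equality of the two radii is exactly what makes this choice symmetric; had the radii differed, the same argument would give a disk of radius equal to their sum, but only the equal-radius case is needed in what follows.
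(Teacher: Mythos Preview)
Your proof is correct and considerably more direct than the paper's. You reduce to the origin by translation and then handle the two inclusions algebraically: the triangle inequality gives $E\subseteq\{|v|\le 2R\}$, and the explicit choice $a=-v/2$, $b=v/2$ gives the reverse. The paper instead argues synthetically: it introduces an auxiliary disk $D_3$ of radius $2R$ centred at a point $O_3$ on the line through $O_1,O_2$, and for each line $\ell_2$ through the midpoint $A$ of $O_1O_2$ it identifies the extreme difference vectors along $\ell_2$ with the chord of $D_3$ via similar triangles. Your approach is shorter, avoids the figure and the case analysis on whether $\ell_2$ meets the boundaries in one or two points, and generalises immediately to unequal radii (yielding a disk of radius $R_1+R_2$). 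The paper's line-by-line sweep is closer in spirit to a Minkowski-sum picture and makes the geometry of the translated disk $D_3$ explicit, but for the purpose at hand your argument is the cleaner one.
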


\begin{proof}
Let $A$ denote the middle point of $O_1$ and $O_2$.
Denote by $r$ the length between $O_1$ and $O_2$.
Let $\ell_1$ denote the straight line passing through $O_1$ and $O_2$.
Let $O_3$ denote the point in $\ell_1$ satisfying that the distance between $O_3$ and $A$ is equal to $r$ and $O_3$ is closer to $O_2$ than $O_1$.
Denote by $D_3$ the closed disk with the center $O_3$ whose radius is $2R$.
Then $D_3$ and $D_2$ are similar, and the ratio is $2$.

Let $\ell_2$ be a straight line passing through $A$ and intersecting $D_2$.
Let $P$ denote the point among the intersection points of $\ell_2$ and $\partial D_1$ which is further from $D_2$ (Fig \ref{figsim}).
The other intersection point of $\ell_2$ and $\partial D_1$ is denoted by $S$ if $\ell_2 \cap \partial D_1$ is not a single point.
If $\ell_2 \cap \partial D_1$ is a single point, then the point $S$ is given by $P$.
Let $U$ denote the point of $\ell_2 \cap \partial D_2$ which is further from $D_1$.
The other point of $\ell_2 \cap \partial D_2$ is denoted by $T$ ($T=U$ if $\ell_2 \cap \partial D_2$ is a single point).
Furthermore, let $W$ denote the intersection point of $\ell_2$ and $\partial D_3$ further from $D_1$.
The other intersection point is denoted by $V$ ($V=W$ when $\ell_2 \cap \partial D_3$ is a single point).
\begin{figure}
\label{figsim}
\begin{center}
\includegraphics[width=\linewidth,clip]{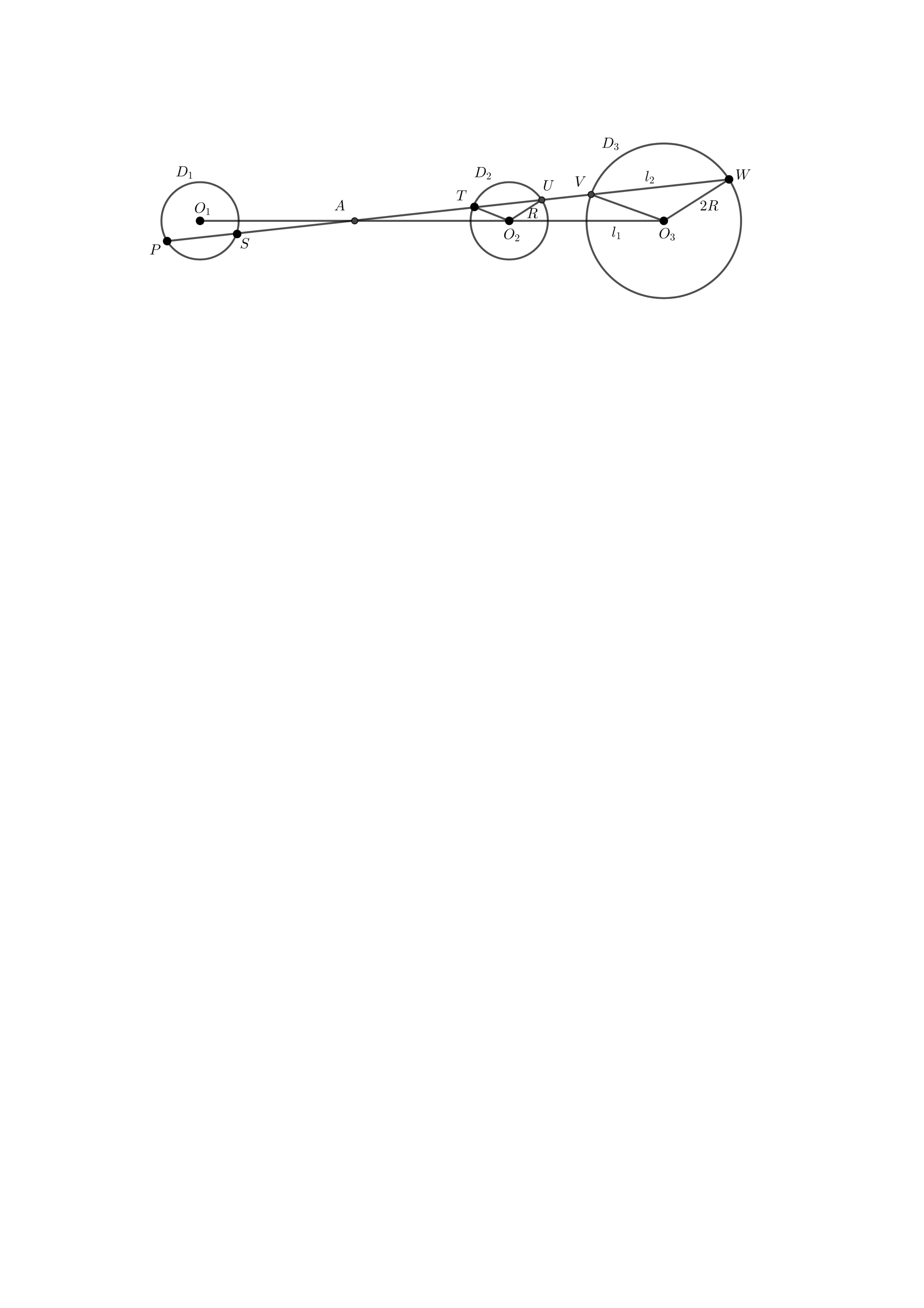}
\end{center}
\caption{}
\end{figure}

We take the orientation of $\ell_2$ so that the direction from $A$ to $U$ is positive.
Then the maximum vector of $D_2-D_1$ in $\ell_2$ with respect to this orientation is $\overrightarrow{PU}$.
By symmetry, $\overrightarrow{PA}=\overrightarrow{AU}$.
Since the triangle $AUO_2$ is similar to the triangle $AWO_3$ , we have $2 \overrightarrow{AU}=\overrightarrow{AW}$.
Thus we obtain $\overrightarrow{PU}=2\overrightarrow{AU}=\overrightarrow{AW}$.
On the other hand, the minimum vector of $D_2-D_1$ in $\ell_2$ with respect to the positive orientation of $\ell_2$ is $\overrightarrow{ST}$, where $\overrightarrow{ST}$ can be in the negative direction with respect to $\ell_2$ when $D_1 \cap D_2\neq \emptyset$.
By symmetry, $\overrightarrow{AT}=\overrightarrow{SA}$, and the triangle $ATO_2$ is similar to the triangle $AVO_3$.
Thus $\overrightarrow{ST}=2 \overrightarrow{AT}=\overrightarrow{AV}$.
As a consequence, we obtain $\ell_2 \cap (D_2-D_1)=\ell_2 \cap \{v\,;\, |v-\overrightarrow{O_1O_2}|\leq 2 R\}$.
This is true for any straight line $\ell_2$ passing through $A$ and intersecting $D_2$.
Thus we conclude that $D_2-D_1=\{v\,;\, |v-\overrightarrow{O_1O_2}|\leq 2 R\}$.
\end{proof}

\begin{lemma}
\label{lem4}
$m({Q_c}^{-n}(D)-{Q_c}^{-n}(D)) < 12\pi 4^n {K_n}^2$
\end{lemma}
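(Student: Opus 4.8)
The plan is to cover the two-dimensional difference set by finitely many disks of a single controlled size and then bound its measure by subadditivity. First I would use the dyadic decomposition ${Q_c}^{-n}(D)=\bigcup_{\{s_0,\dots,s_n\}} I{s_0\cdots s_n}$ into $2^{n+1}$ pieces, one for each string of $0$'s and $1$'s of length $n+1$. By Lemma \ref{lem1} every such piece satisfies $\diam I{s_0\cdots s_n}<K_n$, and by Lemma \ref{lem2} each piece lies in a closed disk of radius $\frac{\sqrt{3}}{2}\diam I{s_0\cdots s_n}$. Since Lemma \ref{lem3} is stated only for two disks of equal radius, I would enlarge each of these disks about its own center up to the common radius $R=\frac{\sqrt{3}}{2}K_n$; the enlarged disks $\tilde D{s_0\cdots s_n}$ all have radius $R$ and still contain the corresponding $I{s_0\cdots s_n}$.

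Next I would express the difference set as a union over ordered pairs of indices and pass to the covering disks,
$$
{Q_c}^{-n}(D)-{Q_c}^{-n}(D)=\bigcup_{s,t}\bigl(I{t}-I{s}\bigr)\subseteq\bigcup_{s,t}\bigl(\tilde D{t}-\tilde D{s}\bigr),
$$
where $s$ and $t$ each range over the $2^{n+1}$ strings of length $n+1$. Because $\tilde D{s}$ and $\tilde D{t}$ now share the radius $R$, Lemma \ref{lem3} identifies each $\tilde D{t}-\tilde D{s}$ as a closed disk of radius $2R=\sqrt{3}\,K_n$, hence of area $\pi(2R)^2=3\pi K_n^2$. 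The whole difference set is thereby covered by $(2^{n+1})^2=4\cdot 4^n$ disks, each of area $3\pi K_n^2$.

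Finally I would invoke countable subadditivity of the Lebesgue measure to get
$$
m\bigl({Q_c}^{-n}(D)-{Q_c}^{-n}(D)\bigr)\le \sum_{s,t} m\bigl(\tilde D{t}-\tilde D{s}\bigr)=4\cdot 4^n\cdot 3\pi K_n^2=12\pi 4^n K_n^2,
$$
which is exactly the asserted bound. Strictness comes for free: the enlarged radius $R=\frac{\sqrt{3}}{2}K_n$ strictly exceeds $\frac{\sqrt{3}}{2}\diam I{s_0\cdots s_n}$ for every string, since the diameter bound of Lemma \ref{lem1} is strict, and moreover the covering disks genuinely overlap (each diagonal piece $I{s}-I{s}$ contains the origin), so the measure of the union is strictly below the sum. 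I do not expect a real obstacle; the only point requiring care is the mismatch between the varying radii produced by Lemma \ref{lem2} and the equal-radius hypothesis of Lemma \ref{lem3}, which is precisely what the uniform enlargement to radius $\frac{\sqrt{3}}{2}K_n$ resolves. Everything then collapses to the single scale $K_n$ and the combinatorial count of ordered pairs, which together yield the clean constant $12\pi$.
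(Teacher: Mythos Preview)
Your argument is correct and follows essentially the same route as the paper: decompose into the $2^{n+1}$ pieces $I{s_0\cdots s_n}$, pass to the covering disks of Lemma~\ref{lem2}, apply Lemma~\ref{lem3}, and sum the $2^{2n+2}$ terms of area $3\pi K_n^2$. The only difference is that you make explicit the enlargement of each disk to the common radius $\frac{\sqrt{3}}{2}K_n$ so that the equal-radius hypothesis of Lemma~\ref{lem3} is literally satisfied, whereas the paper passes directly from $D{s_0\cdots s_n}-D{s_0'\cdots s_n'}$ to the bound $\pi(\sqrt{3}K_n)^2$ without spelling this out.
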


\begin{proof}
We have 
\begin{eqnarray*}
& &m({Q_c}^{-n}(D)-{Q_c}^{-n}(D))\\
&=&m(\bigcup_{\{ s_0,s_1,\cdots, s_n, {s_0}',{s_1}',\cdots, {s_n}'\}} (I{s_0s_1\cdots s_n}-I{{s_0}'{s_1}'\cdots {s_n}' }))\\
&\leq& \sum_{\{ s_0,s_1,\cdots, s_n, {s_0}',{s_1}',\cdots, {s_n}'\}} m(I{s_0s_1\cdots s_n}-I{{s_0}'{s_1}'\cdots {s_n}' })\\
&\leq& \sum_{\{ s_0,s_1,\cdots, s_n, {s_0}',{s_1}',\cdots, {s_n}'\}} m(D{s_0s_1\cdots s_n}-D{{s_0}'{s_1}'\cdots {s_n}' })
\end{eqnarray*}
(see Figure \ref{figsim}).
\begin{figure}
\label{figsl}
\begin{center}
\includegraphics[width=0.8\linewidth,clip]{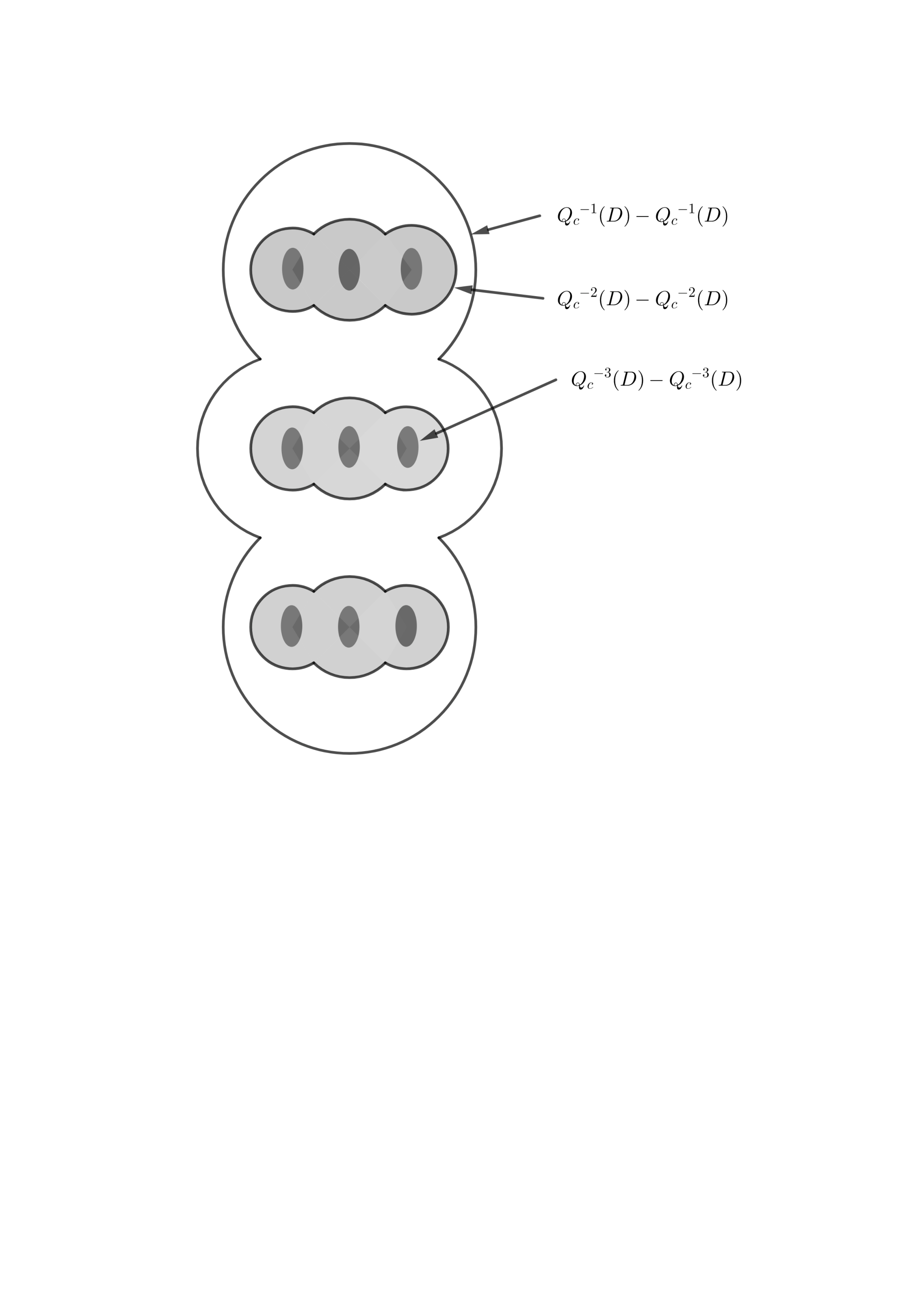}
\end{center}
\caption{${Q_c}^{-n}(D)-{Q_c}^{-n}(D)$}
\end{figure}

Here $\diam I{s_0s_1\cdots s_n} < K_n$ by Lemma \ref{lem1}.
The radius of $D{s_0s_1\cdots s_n}$ is equal to $\frac{\sqrt{3}}{2} \diam I{s_0s_1\cdots s_n}$ by Lemma \ref{lem2}, which is smaller than $\frac{\sqrt{3}}{2}K_n$. 
Furthermore, the radius of $D{{s_0}'{s_1}'\cdots {s_n}' }$ is equal to $\frac{\sqrt{3}}{2}\diam I{{s_0}'{s_1}'\cdots {s_n}' }$, which is also smaller than $\frac{\sqrt{3}}{2}K_n$.
By Lemma \ref{lem3}, 
\begin{eqnarray*}
& &m({Q_c}^{-n}(D)-{Q_c}^{-n}(D))\\
&<& \sum_{\{ s_0,s_1,\cdots, s_n, {s_0}',{s_1}',\cdots, {s_n}'\}} \pi(\sqrt{3}K_n)^2\\
&=& 2^{2n+2} 3 \pi {K_n}^2\\
&=& 12 \pi 4^n {K_n}^2
\end{eqnarray*}
\end{proof}

\noindent {\bf Proof of Theorem \ref{thm1}}

When $|c|>3+\sqrt{3}$, we have
\begin{eqnarray*}
|c|^2-6|c|+6&>&0\\
1+4|c| &<& (2 |c|-5)^2\\
\sqrt{\frac{2|c|-1-\sqrt{1+4|c|}}{2}}&>&\sqrt{2}.
\end{eqnarray*}
Therefore, there is a small $\epsilon>0$ such that $\sqrt{\frac{2|c|-1-\epsilon - \sqrt{1+4|c|}}{2}}>\sqrt{2}$.
Let $\delta$ $(>0)$ denote $\sqrt{\frac{2|c|-1-\epsilon - \sqrt{1+4|c|}}{2}}-\sqrt{2}$.

On the other hand, $\lim_{n\rightarrow \infty} R_n=\frac{1+\sqrt{1+4|c|}}{2}$ and $\lim_{n\to \infty}r_n=\sqrt{\frac{2|c|-1- \sqrt{1+4|c|}}{2}}$.
Since $r_n$ is monotone increasing, there exists a positive integer $N$ such that, if $n>N$, then $r_n \geq 
\sqrt{\frac{2|c|-1-\epsilon-\sqrt{1+4|c|}}{2}}(=\sqrt{2}+\delta)$.

By Lemma \ref{lem1}, $\diam I{s_0s_1\cdots s_n} < K_n$, where $K_n=2^{-\frac{n}{2}} \frac{1}{r_{n+1}r_n\cdots r_2} \diam I_0$.
For $n>N$, we obtain 
\begin{eqnarray*}
& & m({Q_c}^{-n}(D)-{Q_c}^{-n}(D))\\
&<& 12 \pi 4^n {K_n}^2\\
&=& 12 \pi 4^n \frac{1}{2^n} \left( \frac{1}{r_{n+1}r_n\cdots r_{N+1}} \right)^2 \left(\frac{1}{r_Nr_{N-1}\cdots r_2}\right)^2 \diam^2 I_0\\
&\leq & 12 \pi 4^n \frac{1}{2^n} \frac{1}{(\sqrt{2}+\delta)^{2(n+1-N)}} \left( \frac{1}{r_N r_{N-1}\cdots r_2}\right)^2 \diam ^2 I_0.
\end{eqnarray*}
Thus there is a constant $K>0$ such that $m({Q_c}^{-n}(D)-{Q_c}^{-n}(D))\leq K \left(  \frac{\sqrt{2}}{\sqrt{2}+\delta} \right)^{2n}$ for $n>N$.
As a consequence, $m(J_c - J_c)=\lim_{n \to \infty} m({Q_c}^{-n}(D)-{Q_c}^{-n}(D)) =0$.
\hfill \qedsymbol



\noindent
Hiromichi Nakayama\\
E-mail: nakayama@gem.aoyama.ac.jp\\
Takuya Takahashi\\
Department of Physics and Mathematics, \\
College of Science and Engineering, \\
Aoyama Gakuin University, \\
5-10-1 Fuchinobe, Sagamihara, Kanagawa, 252-5258, Japan\\


\begin{thebibliography}{99}

\bibitem{De} R.\ Devaney, A first course in chaotic dynamical systems, Addison-Wesley Studies in Nonlinearity. Addison-Wesley Publishing Company, 1992. 
\bibitem{Pa} J.\ Palis, Fractional dimension and homoclinic bifurcations, Colloquium-Hokkaido University (October, 1988).
\bibitem{San} A.\ Sannami, {\em An example of a regular Cantor set whose difference set is a Cantor set with positive measure}, Hokkaido Mathematical Journal {\bf 21} (1992)
7--24.
 \end{thebibliography}
\end{document}